\documentclass[12pt]{amsart}

\newcommand{\nc}{\newcommand}
\nc{\G}{{\Gamma}} \nc{\BC}{{\mathbb C}} \nc{\BQ}{{\mathbb Q}}
\nc{\BR}{{\mathbb R}} \nc{\BZ}{{\mathbb Z}} \nc{\BP}{{\mathbb P}}
\nc{\BN}{{\mathbb N}} \nc{\BM}{{\mathbb M}}
\nc{\fH}{{\mathbb H}}

\nc{\PS}{{\mbox{PSL}_2(\BZ)}} \nc{\SL}{{\mbox{SL}_2(\BZ)}}
\nc{\SR}{{\mbox{SL}_2(\BR)}} \nc{\PR}{{\mbox{PSL}_2(\BR)}}
\nc{\GL}{{\mbox{GL}_2^+(\BQ)}} \nc{\PQ}{{\mbox{PGL}_2^+(\BQ)}}
\nc{\GR}{{\mbox{GL}_2^+(\BR)}} \nc{\PG}{{\mbox{PGL}_2^+(\BR)}}
\nc{\GC}{{\mbox{GL}_2(\BC)}} \nc{\SC}{{\mbox{SL}_2(\BC)}}
\nc{\f}{{\mathcal{F}(\fH)}}
\nc{\Cc}{\widehat{\BC}}
\nc{\e}{{E_{\rho}(\G)}}
\nc{\g}{{\gamma}}
\nc{\vm}{{V_{\rho}(\G)}}

\newtheorem{numbered}{}[section]
\newtheorem{thm}[numbered]{Theorem}

\newtheorem{remark}[numbered]{Remark}
\newtheorem{prop}[numbered]{Proposition}
\newtheorem{cor}[numbered]{Corollary}

\setlength{\parskip}{4pt}

\numberwithin{equation}{section}

\newcommand{\thmref}[1]{Theorem~\ref{#1}}
\newcommand{\propref}[1]{Proposition~\ref{#1}}

\newcommand{\corref}[1]{Corollary~\ref{#1}}

\begin{document}

\title{Equivariant functions and vector-valued modular forms}
\author[]{Hicham Saber}\author[]{Abdellah Sebbar}
\address{Department of Mathematics and Statistics, University of Ottawa, Ottawa Ontario K1N 6N5 Canada}
\email{hsabe083@uottawa.ca}
\email{asebbar@uottawa.ca}
\subjclass[2000]{11F11}
\keywords{Equivariant forms, Vector-Modular forms, Schwarz derivative, Monodromy }
\begin{abstract}
For any discrete group $\Gamma$ and any 2-dimensional complex representation $\rho$ of $\Gamma$, we introduce the notion  of
$\rho-$equivariant functions, and we show that they are parameterized by vector-valued modular forms. We also provide examples arising from
the monodromy of differential equations.
\end{abstract}
\maketitle
\section{Introduction}
Throughout this paper, by a discrete group, we mean
 a finitely generated Fuchsian group of the first kind, acting on the upper half-plane
$\fH=\{z\in\BC :\, \mbox{Im}\,z>0\}$. Let $\Gamma$ be such a group.
Let $\rho:\G\longrightarrow \GC$ be a 2-dimensional complex representation
of $\G$.  A meromorphic function $h$ on $\fH$ is called a \textit{$\rho-$equivariant function} with respect to $\G$ if
\begin{equation}\label{equiv1}
h(\gamma\cdot z)\,=\,\rho(\gamma)\cdot h(z)\,\ \mbox{ for all }\, z\in\fH\,,\ \gamma\in\G\,,
\end{equation}
where the action on both sides is by linear fractional transformations.
The set of $\rho-$equivariant functions for $\G$ will be denoted by $\e$.

In the case $\rho$ is the defining representation of $\G$, that is $\rho(\gamma)=\gamma$ for all $\gamma\in\G$, then elements of $\e$ are simply
called equivariant functions. These were studied extensively in \cite{2,3,4} and have various connections to modular forms, quasi-modular forms,
elliptic functions and to sections of the canonical line bundle of $X(\G)=\overline{\G\backslash\fH}$. In particular, one shows that the set of equivariant
functions for a discrete group $\G$ without the trivial one $h_0(z)=z$ has a vector space structure isomorphic to the space of
weight 2 automorphic forms for $\G$.

In this paper, we will treat the general case where $\rho$ is an arbitrary 2-dimensional complex representation of $\G$.
The main result of this paper states that every $\rho-$equivariant function is parameterized by a 2-dimensional vector-valued modular form for $\rho$.
More precisely, if $F=(f_1,f_2)^t$ is a vector-valued modular form for $\G$ and $\rho$ (see Section 2), then $h_F=f_1/f_2$ is a $\rho-$equivariant function for $\G$. We will show that, in fact,
 every $\rho-$equivariant function arises in this way. To achieve this parametrization, we use the fact that
the Schwarz derivative of a $\rho-$equivariant function is a weight 4 automorphic form for $\G$, in addition to the knowledge of the existence of global solutions
to a certain second degree differential equation.

Finally, in the last section, we construct examples of $\rho-$equivariant functions when $\rho$ is the monodromy representation of second degree
ordinary differential equations.

\section{Vector-valued modular forms}
The theory of vector-valued modular forms  was introduced  long ago as a
higher dimensional generalization of the classical (scalar) modular forms.
 Let $\Gamma$ be a discrete group and let $\rho: \G\longrightarrow \mbox{GL}_n(\BC)$ be
an $n-$dimensional complex representation of $\G$. A vector-valued modular form
of integral weight $k$ for $\G$ and representation $\rho$ is an $n-$tuple $F(z)=(f_1(z),\ldots,f_n(z))^t$ of meromorphic functions on the complex upper half-plane $\fH$ satisfying
\begin{equation}\label{inv1}
F(z)|_k \gamma\,(z)\,=\,\rho(\gamma)\,F(z)\, \mbox{ for all } \gamma\in\G\,,\ z\in\fH\,,
\end{equation}
and some growth conditions at the cusps that are similar to those for   classical automorphic forms. The slash operator in \eqref{inv1} is defined as usual by $F|_k\gamma(z)=(cz+d)^{-k}F(\gamma\cdot z)$ where $\displaystyle\gamma=\binom{a\ b}{c\ d}$. The set of these vector-valued modular forms will be denoted by $\vm_k$, and we will refer to them as $\rho-$VMF of weight $k$.

We omit the multiplier system since it will not have any effect in this paper. If $F(z)$ is holomorphic, then it will be called a holomorphic vector-valued modular form.

The theory of vector-valued modular forms is fairly well understood when $\Gamma$ is the modular group. See for instance \cite{kn-ma1} and the references therein. However, except for the genus 0 subgroups of the modular group, it is not even clear that nonzero vector-valued modular forms exist.
In connection with equivariant functions, we have
the following   straightforward proposition.
\begin{prop}\label{direct}
Let $\G$ be a discrete group, and $\rho$ an arbitrary 2-dimensional complex representation of $\G$. If $F(z)=(f_1(z),f_2(z))^t$ is a
vector-valued modular form for $\rho$ of a certain weight, then $f_1(z)/f_2(z)$ is a $\rho-$equivariant function for $\G$.
\end{prop}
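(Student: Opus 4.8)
The plan is to unwind the definition of the slash operator and observe that the scalar automorphy factor cancels in the quotient. First I would rewrite the defining relation \eqref{inv1}. Since $F|_k\g(z)=(cz+d)^{-k}F(\g\cdot z)$ for $\g=\binom{a\ b}{c\ d}$, the condition \eqref{inv1} is equivalent to
\[
F(\g\cdot z)=(cz+d)^k\,\rho(\g)\,F(z),\qquad z\in\fH,\ \g\in\G.
\]
Writing $\rho(\g)=\binom{a_\rho\ b_\rho}{c_\rho\ d_\rho}$ and reading off the two coordinates of this vector identity gives
\[
f_1(\g\cdot z)=(cz+d)^k\bigl(a_\rho f_1(z)+b_\rho f_2(z)\bigr),\qquad
f_2(\g\cdot z)=(cz+d)^k\bigl(c_\rho f_1(z)+d_\rho f_2(z)\bigr).
\]

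Next I would form the ratio $h_F=f_1/f_2$ and divide the two displayed equations. The common factor $(cz+d)^k$ cancels, and after dividing numerator and denominator by $f_2(z)$ one obtains
\[
h_F(\g\cdot z)=\frac{a_\rho f_1(z)+b_\rho f_2(z)}{c_\rho f_1(z)+d_\rho f_2(z)}
=\frac{a_\rho\,h_F(z)+b_\rho}{c_\rho\,h_F(z)+d_\rho}
=\rho(\g)\cdot h_F(z).
\]
This is exactly the equivariance relation \eqref{equiv1}, with the right-hand action understood as the linear fractional transformation attached to $\rho(\g)$, so $h_F\in\e$.

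Finally I would dispose of the regularity hypothesis: as a quotient of two meromorphic functions on $\fH$ (with $f_2\not\equiv 0$, which holds unless $F$ degenerates identically), $h_F$ is itself meromorphic and hence a bona fide $\rho$-equivariant function. I do not anticipate any genuine obstacle here; the entire content is the bookkeeping observation that the weight-$k$ automorphy factor $(cz+d)^k$ enters both components of $F(\g\cdot z)$ identically and therefore disappears from the quotient. This is precisely what renders the weight $k$ irrelevant to the equivariance and explains why the proposition holds for a form of \emph{any} weight.
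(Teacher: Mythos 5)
Your proof is correct, and it is precisely the computation the paper has in mind: the authors label the proposition ``straightforward'' and omit the argument, which is exactly this cancellation of the automorphy factor $(cz+d)^k$ in the quotient of the two components. Nothing further is needed beyond your remark that $f_2\not\equiv 0$ so that $h_F=f_1/f_2$ is a genuine meromorphic function.
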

We will prove below that every $\rho-$equivariant function arises in this way.

\section{Differential equations}
Let $D$ be a domain in $\mathbb C$ and let $f$ be a  meromorphic function on  $D$. Its Schwarz derivative, $S(f)$, is defined by
$$
S(f)\,=\, \left(\frac{f''}{f'}\right)'-\frac{1}{2}\left(\frac{f''}{f'}\right)^2\,.
$$
This is an important tool in projective geometry and differential equations. The main properties that will be useful to us are summarized as follows (
see \cite{mckay-sebbar} for more details):

\begin{prop}\label{schwarz} We have
\begin{enumerate}
\item If $y_1$ and $y_2$ are two linearly independent solutions
to a differential equation $y''+Qy=0$  where $Q$ is a meromorphic function on $D$, then $S(y_1/y_2)=2Q$.
\item If $f$ and $g$ are two meromorphic functions on $D$, then  $S(f)=S(g)$ if and only if
$\displaystyle f=\frac{ag+b}{cg+d}$ for some $\displaystyle\binom{a\ b}{c\ d}\in\GC$.
\item $S(f\circ \gamma)(z)=(cz+d)^4S(f)$  provided $\gamma\cdot z\in D$, where $\displaystyle \gamma=\binom{*\ *}{c\ d}$.
\end{enumerate}
\end{prop}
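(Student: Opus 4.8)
The plan is to treat the three items in turn, deriving everything from the defining identity $S(f)=p'-\tfrac12 p^2$ with $p:=f''/f'$ and from the behaviour of $p$ under composition. For (1), I would set $w:=y_1/y_2$ and exploit that the Wronskian $W:=y_1'y_2-y_1y_2'$ is a nonzero constant: because $y''+Qy=0$ has no first-order term, $W'=y_1''y_2-y_1y_2''=(-Qy_1)y_2-y_1(-Qy_2)=0$, and $W\neq0$ by linear independence. Then $w'=W/y_2^2$, so $w''/w'=(\log w')'=-2\,y_2'/y_2$. Differentiating once more and substituting $y_2''/y_2=-Q$ gives $(w''/w')'=2Q+2(y_2'/y_2)^2$, whereas $\tfrac12(w''/w')^2=2(y_2'/y_2)^2$; subtracting yields $S(w)=2Q$. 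This is short and self-contained, and I anticipate no difficulty.

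For (3), I would first establish the chain rule $S(f\circ g)=\big(S(f)\circ g\big)(g')^2+S(g)$ by a direct computation from the definition, expressing $p_{f\circ g}$ through $p_f\circ g$, $g'$ and $g''$. Specialising to a M\"obius map $g=\gamma$, one has $S(\gamma)=0$ (a two-line check, or the case $Q=0$ of (1)) and $\gamma'(z)=(ad-bc)(cz+d)^{-2}$; substituting these into the chain rule produces the stated weight-$4$ transformation law for $S(f\circ\gamma)$ in terms of $S(f)$. This is again routine once the chain rule is available.

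The substance lies in (2). The implication $f=(ag+b)/(cg+d)\Rightarrow S(f)=S(g)$ is immediate from the chain rule of the previous paragraph together with $S(\text{M\"obius})=0$, since post-composition by a M\"obius transformation leaves $S$ unchanged. For the converse, suppose $S(f)=S(g)=:2Q$. The key step is to reconstruct from $f$ alone a basis of solutions of $y''+Qy=0$ whose ratio is $f$: I would put $y_2:=(f')^{-1/2}$, $y_1:=f\,(f')^{-1/2}$ and verify directly that both solve $y''+Qy=0$ with $Q=\tfrac12S(f)$ --- indeed $y_2''/y_2=-\tfrac12\big(p'-\tfrac12p^2\big)=-Q$, which is the Schwarzian reappearing. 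The same construction applied to $g$ gives a second basis $z_1,z_2$ of the same two-dimensional solution space, so there is a constant matrix $\binom{a\ b}{c\ d}\in\GC$ with $y_1=az_1+bz_2$ and $y_2=cz_1+dz_2$; dividing these two relations yields $f=(ag+b)/(cg+d)$.

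The main obstacle is precisely this converse. The auxiliary functions $(f')^{-1/2}$ and $(g')^{-1/2}$ are defined only up to sign and only locally, on simply connected patches of $D$ away from the zeros and poles of $f'$ and $g'$, so I must upgrade the local change-of-basis matrix to a single constant matrix valid on all of $D$. I would handle this by analytic continuation: the identity $f=(ag+b)/(cg+d)$ with locally constant coefficients propagates across the connected domain $D$, and the identity theorem forces the coefficients to be globally constant, with $ad-bc\neq0$ because $f$ is non-constant.
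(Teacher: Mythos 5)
Your proposal is correct and complete; note, however, that the paper does not actually prove this proposition at all --- it is stated as a summary of classical properties of the Schwarzian derivative with a pointer to \cite{mckay-sebbar} --- so there is no in-paper argument to compare against, and what you have written is essentially the standard textbook proof that the cited reference would supply. Your Wronskian computation for (1) and the cocycle identity $S(f\circ g)=\bigl(S(f)\circ g\bigr)(g')^2+S(g)$ for (3) are exactly right, and your treatment of the delicate point in (2) is sound: the functions $y_2=(f')^{-1/2}$, $y_1=f(f')^{-1/2}$ do satisfy $y''+Qy=0$ with $Q=\tfrac12 S(f)$ (one checks $y_2''/y_2=-\tfrac12 p'+\tfrac14 p^2=-Q$ with $p=f''/f'$, and $y_1''=fy_2''$ since the cross terms cancel), the sign ambiguity of the square root is harmless because it only rescales the basis by $\pm 1$ and drops out of the ratio, and the passage from a locally constant matrix to a global one via the identity theorem on the connected domain $D$ is the correct repair --- once $f(cg+d)=ag+b$ holds on one open patch, it holds on all of $D$ by analytic continuation of meromorphic functions. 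Two minor points worth recording: the Wronskian normalization $y_1'y_2-y_1y_2'=f'\cdot(f')^{-1}=1$ gives invertibility of the change-of-basis matrix directly, which is cleaner than invoking nonconstancy of $f$; and in (3) your chain rule actually yields $S(f\circ\gamma)(z)=(ad-bc)^2(cz+d)^{-4}\,S(f)(\gamma\cdot z)$ for general $\gamma\in\GC$, so the paper's displayed formula, as literally printed, suppresses both the determinant factor (i.e.\ tacitly assumes $ad-bc=1$) and the evaluation at $\gamma\cdot z$, and has the exponent on the wrong side; your version is the correct one and is the form actually needed for \propref{s-autom}.
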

In particular, we have
\begin{prop}\label{s-autom}
If $f$ is a $\rho$-equivariant for a discrete group $\G$, then $S(f)$ is an automorphic form of weight 4 for $\G$.
\end{prop}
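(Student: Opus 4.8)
The plan is to obtain the weight-$4$ transformation law for $S(f)$ by applying the Schwarz derivative to the defining relation \eqref{equiv1} and letting the two key properties in \propref{schwarz} do the work, and then to dispatch the behaviour at the cusps separately.

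First I would recast \eqref{equiv1} as an identity of meromorphic functions on $\fH$. Writing $\gamma=\binom{a\ b}{c\ d}\in\G$ and $\rho(\gamma)=\binom{a'\ b'}{c'\ d'}\in\GC$, the equivariance relation becomes $f\circ\gamma=\rho(\gamma)\cdot f$, where the left-hand side is $f$ precomposed with the M\"obius map $z\mapsto\gamma\cdot z$ and the right-hand side is $f$ postcomposed with the M\"obius map $w\mapsto (a'w+b')/(c'w+d')$.

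Next I would apply $S$ to both sides. The conceptual heart of the argument lies on the right: since $\rho(\gamma)$ is invertible (it lies in $\GC$), the function $\rho(\gamma)\cdot f$ is a genuine fractional-linear transform of $f$, so part (2) of \propref{schwarz} gives $S(\rho(\gamma)\cdot f)=S(f)$. This is precisely what renders the representation $\rho$ invisible to the Schwarz derivative, and it is the step that makes the statement hold for an arbitrary $\rho$ rather than only for the defining representation. On the left, part (3) of \propref{schwarz} --- the cocycle behaviour of $S$ under the change of variable $z\mapsto\gamma\cdot z$ --- supplies the automorphy factor $(cz+d)^4$. Equating the two computations yields $S(f)(\gamma\cdot z)=(cz+d)^4\,S(f)(z)$ for every $\gamma\in\G$, which is exactly the transformation law of a weight-$4$ automorphic form.

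Finally, to upgrade this transformation law to the assertion that $S(f)$ is an automorphic form of weight $4$, I would verify the growth/meromorphy conditions at the cusps of $\G$: because $f$ is meromorphic on $\fH$, its Schwarz derivative is meromorphic on $\fH$, and the local expansion of $f$ at each cusp controls that of $S(f)$. I expect essentially no obstacle in the transformation law itself, as it drops out immediately once one recognizes that post-composition by $\rho(\gamma)$ does not change the Schwarzian; the only point demanding care is this cusp bookkeeping, where one must confirm that a meromorphic singularity of $f$ produces a meromorphic, rather than essential, singularity of $S(f)$.
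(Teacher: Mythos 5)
Your proposal is correct and follows exactly the argument the paper intends: the paper offers no separate proof, presenting the proposition as an immediate consequence ("In particular...") of parts (2) and (3) of \propref{schwarz}, which is precisely the computation you carry out. Your additional remark on cusp behaviour is a reasonable extra check that the paper leaves implicit.
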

Now, consider the second order ordinary differential equation (ODE)
$$x''+Px'+Qx=0,
$$
where $P$ and $Q$ are holomorphic functions in $D$. This ODE has two linearly independent holomorphic solutions in $D$ if $D$ is simply connected.
For a fixed $z_0\in D$, set
$$y(z)\,=\,x(z)\exp\left(\int_{z_0}^{z}\frac{1}{2}P(w)dw\right).$$
 The above ODE reduces to an ODE in normal form
 \begin{equation}\label{normal-form}
 y''\ +\ gy\ =\ 0\, ,
 \end{equation}
 with
 $$
  g=Q-\frac{1}{2}P'-\frac{1}{4}P^2.
 $$


 When the domain $D$ is not simply connected, we may not expect to
  find global solutions  to \eqref{normal-form} on $D$. However, under some conditions on $g$,
 global solutions do exist as it is illustrated in the following theorem which will be crucial for the rest of this paper.
 \begin{thm}\label{thm3.3}
 Let $D$ be a domain in $\BC$. Suppose $h$ is a nonconstant meromorphic function on $D$ such that $S(h)$ is holomorphic in $D$, and let $g=\frac{1}{2}S(h)$.
 Then the differential equation
 $$y''\,+\,gy\,=\,0$$
 has two linearly independent holomorphic solutions in $D$.
 \end{thm}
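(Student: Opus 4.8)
The plan is to manufacture the two solutions explicitly out of $h$ and then to show that, although $D$ need not be simply connected, they can be chosen single-valued on all of $D$. The starting point is a local ansatz: a direct computation from the definition of the Schwarz derivative shows that $y_2=(h')^{-1/2}$ satisfies $y_2''/y_2=-\tfrac12 S(h)=-g$, so that $y_2$ and $y_1=h\,(h')^{-1/2}$ are, on any simply connected subset of $D$, two linearly independent solutions of $y''+gy=0$ with ratio $y_1/y_2=h$; this is consistent with \propref{schwarz}(1), which gives $S(y_1/y_2)=2g=S(h)$. Since $g$ is holomorphic on $D$, the classical existence theorem for linear ODEs with holomorphic coefficients guarantees that about each point of $D$ the solution space is two-dimensional and consists of holomorphic germs, so any apparent singularity of the formula $(h')^{-1/2}$ is removable at the level of the solutions themselves.

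Next I would extract the local behaviour forced by the hypothesis that $S(h)$ is holomorphic. Expanding near a zero of $h'$ of order $m$ yields $S(h)\sim -\tfrac{m(m+2)}{2}(z-a)^{-2}$, and near a pole of $h$ of order $p$ it yields $S(h)\sim \tfrac{1-p^2}{2}(z-a)^{-2}$. Holomorphicity of $S(h)$ forces both leading coefficients to vanish, i.e. $m=0$ and $p=1$: on $D$ the function $h$ has no critical points and only simple poles. Hence the divisor of $h'$ on $D$ consists solely of double poles (at the poles of $h$) and has no zeros, so $h'$ has even order everywhere. Fixing a local branch of $(h')^{-1/2}$ then makes both $y_1$ and $y_2$ holomorphic at every point of $D$: away from the poles of $h$ this is clear, and at a simple pole one checks that $(h')^{-1/2}$ acquires a simple zero while $h\,(h')^{-1/2}$ stays finite and nonzero.

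It remains to globalize, which I would do through the monodromy. Continuing the local basis $(y_1,y_2)^t$ around a loop $\gamma\in\pi_1(D)$ replaces it by $M(\gamma)(y_1,y_2)^t$ for some $M(\gamma)\in\GC$. Because the ratio $y_1/y_2=h$ is single-valued and $h$ is nonconstant, the Möbius transformation attached to $M(\gamma)$ fixes $h(z)$ for every $z$, which forces $M(\gamma)$ to be scalar, say $\lambda(\gamma)I$. The Wronskian $W=y_1'y_2-y_1y_2'=h'\,(h')^{-1}=1$ is a nonzero constant (the equation has no first-order term) and transforms by $\lambda(\gamma)^2$ under $\gamma$, so $\lambda(\gamma)^2=1$ and the monodromy takes values in $\{\pm I\}$.

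The main obstacle is precisely this last reduction: upgrading the $\{\pm1\}$ monodromy to the trivial one, equivalently showing that $(h')^{-1/2}$ is genuinely single-valued on $D$. The even-order divisor of $h'$ established above removes all the branch-point contributions, so the only possible multivaluedness is the sign picked up from the winding of $h'$ along the non-separating loops of $D$; ruling this out is the delicate heart of the statement and is exactly where the full hypothesis — a single-valued meromorphic $h$ with holomorphic Schwarzian on \emph{this} $D$ — must be invoked. Once the winding is shown to be even, $y_1=h\,(h')^{-1/2}$ and $y_2=(h')^{-1/2}$ descend to two linearly independent global holomorphic solutions, completing the proof.
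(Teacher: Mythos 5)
Your construction is sound as far as it goes, and it is more explicit than the paper's: the paper never writes down $(h')^{-1/2}$, but instead covers $D$ by discs $U_i$, takes local solution bases $(K_i,L_i)$, uses \propref{schwarz} to write $K_i/L_i=\alpha_i\cdot h$, and patches the renormalized bases $\alpha_i^{-1}(K_i,L_i)^t$ together. Your local analysis (holomorphy of $S(h)$ forces $h$ to have no critical points and only simple poles, the Wronskian of $y_1=h(h')^{-1/2}$ and $y_2=(h')^{-1/2}$ equals $1$, the monodromy lies in $\{\pm I\}$) is correct. But the proof is not finished: you stop at ``once the winding is shown to be even,'' and that step is the entire content of the theorem, not a loose end. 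Worse, it cannot be supplied in general. Take $D=\BC\setminus\{0\}$ and $h(z)=z^2$: then $S(h)=-3/(2z^2)$ is holomorphic on $D$, but the equation $y''-\tfrac{3}{4z^2}\,y=0$ has local solution space spanned by $z^{3/2}$ and $z^{-1/2}$, whose monodromy around the puncture is $-I$, so \emph{no} nonzero solution is single-valued on $D$. The $\pm1$ character of $\pi_1(D)$ that you isolated is a genuine obstruction, and some hypothesis beyond ``$h$ meromorphic and $S(h)$ holomorphic on $D$'' is needed to kill it.

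It is worth noting that your argument, precisely because it is explicit, exposes the same issue in the paper's own proof: from $\alpha_i\cdot h=\alpha_W\alpha_j\cdot h$ one may only conclude $\alpha_i=\lambda\,\alpha_W\alpha_j$ for some nonzero scalar $\lambda$, since equality of M\"obius transformations determines a matrix in $\GC$ only up to scale; after normalizing Wronskians this $\lambda$ is exactly your sign, and the step ``$\alpha_i\alpha_j^{-1}=\alpha_W$'' silently trivializes that sign cocycle. So the concrete gap in your proposal is the unproven evenness of the winding of $h'$ along loops of $D$; to close it you would have to either add it as a hypothesis or establish it in the situation where the theorem is actually applied later (with $h$ a $\rho$-equivariant function and $D$ the complement in $\fH$ of the poles of an automorphic form), since for arbitrary $D$ and $h$ the claimed conclusion fails.
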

\begin{proof}
  Let $\{U_i, i\in I\}$ be a covering of $D$ by open discs with $\mbox{dim}\,V(U_i)= 2$ for all $i\in I$ where  $V(U_i)$ denotes the space of holomorphic solutions to $y''+gy\,=\,0$ on $U_i$. Choose $L_i$ and $K_i$ to form a basis for $V(U_i)$. Using property (1) of \propref{schwarz}, we have $S(K_i/L_i)=2g=S(h)$ on $U_i$. Now, using property (2) of \propref{schwarz}, we have $K_i/L_i=\alpha_i\cdot h$ for $\alpha_i\in\GC$.
  In the meantime, on each connected component $W$ of $U_i\cap U_j$,  we have
  $$
  (K_i,L_i)^t=\alpha_W(K_j,L_j)^t\ ,\ \ \alpha_W\in\GC\,,
  $$
since each of $(K_i,L_i)$ and $(K_j,L_j)$ is a basis of $V(W)$. Hence, on $W$ we have
$$
\frac{K_i}{L_i}\,=\,\alpha_W\cdot \frac{K_j}{L_j} \,,
$$
and therefore
$$
\alpha_ih\,=\,\alpha_W\alpha_jh\,.
$$
It follows that
$$
\alpha_i\alpha_j^{-1}\,=\,\alpha_W
$$
as $h$ is meromorphic and nonconstant and thus it takes more than three distinct values on the domain $D$.
Therefore, $\alpha_W$ does not depend on $W$. Moreover,  on $U_i\cap U_j$ we have
\begin{equation}\label{patch}
\alpha_i^{-1}(K_i,L_i)^t\,=\,\alpha_j^{-1}(K_j,L_j)^t\,.
\end{equation}
If we define $f_1$ and $f_2$ on $U_i$ by
$$
(f_1,f_2)^t=\alpha_i^{-1}(K_i,L_i)^t\,,
$$
then using \eqref{patch}, we see that  $f_1$ and $f_2$ are well defined all over $D$ and they are two linearly independent solutions to $y''+gy=0$ on all of $D$ as they are linearly independent over $U_i$.
\end{proof}

\section{The correspondence}
In this section, we will prove that every $\rho-$equivariant function arises from a vector-valued modular form as in \propref{direct}.
We start with the following property of
the slash operator  known as Bol's identity.

\begin{prop} Let $r$ be a nonnegative integer, $F(z)$ a complex function and $\gamma\in\SC$, then
$$
(F|_{-r}\gamma)^{(r+1)}(z)\,=\,F^{(r+1)}|_{r+2}\gamma(z)\,.
$$
\end{prop}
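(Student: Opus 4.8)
The plan is to unwind the slash operator and reduce the whole statement to a single vanishing of coefficients. Writing $\g=\binom{a\ b}{c\ d}\in\SC$ and $w=\g\cdot z$, the identity to be proved reads
\[
\Big(\tfrac{d}{dz}\Big)^{r+1}\big[(cz+d)^{r}F(w)\big]\;=\;(cz+d)^{-(r+2)}F^{(r+1)}(w).
\]
The one fact that makes everything run is that $\det\g=1$ forces $w'=(cz+d)^{-2}$; this is exactly where the hypothesis $\g\in\SC$ is used, since for a matrix in $\GC$ of determinant $\Delta$ one would instead get $w'=\Delta(cz+d)^{-2}$ and an extra factor in the conclusion.

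First I would differentiate $k$ times, $0\le k\le r+1$, keeping careful track of the shape of the result. Applying the product rule to the prefactor $(cz+d)^r$ and the chain rule to $F(w)$ together with $w'=(cz+d)^{-2}$, an easy induction shows that
\[
\Big(\tfrac{d}{dz}\Big)^{k}\big[(cz+d)^{r}F(w)\big]=\sum_{j=0}^{k} b^{(k)}_{j}\,(cz+d)^{r-k-j}F^{(j)}(w),
\]
where the scalars $b^{(k)}_{j}$ satisfy $b^{(k+1)}_{j}=(r-k-j)\,c\,b^{(k)}_{j}+b^{(k)}_{j-1}$ with $b^{(0)}_{0}=1$ and $b^{(k)}_{j}=0$ for $j<0$ or $j>k$. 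Next I would solve this recursion explicitly. The first few cases suggest, and a direct check confirms, that
\[
b^{(k)}_{j}=\binom{k}{j}\,c^{\,k-j}\,(r-j)(r-j-1)\cdots(r-k+1),
\]
the trailing product having $k-j$ descending factors (and being empty, hence $1$, when $j=k$). Verifying that this formula obeys the recursion comes down to the elementary identity $(r-k-j)\binom{k}{j}+(r-j+1)\binom{k}{j-1}=(r-k)\binom{k+1}{j}$, which follows from Pascal's rule and $j\binom{k}{j}=(k-j+1)\binom{k}{j-1}$.

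The conclusion then drops out at $k=r+1$. For each $j\le r$ the descending product defining $b^{(r+1)}_{j}$ starts at $r-j\ge 0$ and ends at $r-(r+1)+1=0$, so it contains the factor $0$ and $b^{(r+1)}_{j}=0$; only $j=r+1$ survives, with coefficient $b^{(r+1)}_{r+1}=1$ and factor $(cz+d)^{r-(r+1)-(r+1)}=(cz+d)^{-(r+2)}$, which is precisely $F^{(r+1)}|_{r+2}\g(z)$. I expect the crux to be exactly this vanishing — that every intermediate term $F^{(j)}(w)$ with $j\le r$ cancels — since that cancellation is the entire content of Bol's identity. A tidier alternative avoids the coefficients altogether: the slash operator obeys the cocycle law $(F|_{k}\g_{1})|_{k}\g_{2}=F|_{k}(\g_{1}\g_{2})$, so the set of $\g$ for which the identity holds for all $F$ is closed under multiplication; as $\SC$ is generated by the translations $\binom{1\ b}{0\ 1}$ and the inversion $\binom{0\ -1}{1\ \ 0}$, it then suffices to verify these two cases, the translation case being immediate.
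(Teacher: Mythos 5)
Your proof is correct, and it is worth noting that the paper itself states this proposition (Bol's identity) without any proof, treating it as a classical fact; so your induction supplies a complete, self-contained verification that the paper omits. I checked the key steps: with $w=\gamma\cdot z$ and $\det\gamma=1$ one indeed has $w'=(cz+d)^{-2}$, the recursion $b^{(k+1)}_{j}=(r-k-j)\,c\,b^{(k)}_{j}+b^{(k)}_{j-1}$ is exactly what the product and chain rules produce, and your closed form satisfies it because $(r-k-j)\binom{k}{j}+(r-j+1)\binom{k}{j-1}=(r-k)\binom{k}{j}+(r-k)\binom{k}{j-1}=(r-k)\binom{k+1}{j}$, using $j\binom{k}{j}=(k-j+1)\binom{k}{j-1}$. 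At $k=r+1$ the descending product for $j\le r$ runs through consecutive integers from $r-j\ge 0$ down to $0$, hence vanishes, leaving only $b^{(r+1)}_{r+1}=1$ with exponent $-(r+2)$; this is precisely the content of the identity, and your observation that the determinant hypothesis is what makes the terminal factor $0$ (rather than contributing $\Delta^{r+1}$ and shifting nothing in the vanishing, only the leading constant) is accurate. One caveat on your ``tidier alternative'': reducing to generators via the cocycle law is legitimate ($\SC$ is generated by upper unipotents and the inversion), but the inversion $\binom{0\ -1}{1\ \ 0}$ is the only nontrivial case and you do not actually carry out that computation, so as a standalone argument it would be incomplete; fortunately your main induction does not rely on it.
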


As a consequence, we have

\begin{cor}\label{cor4.2}
Let $r$ be a nonnegative integer, $g$ a $\Gamma-$automorphic form of weight $2(r+1)$ and $D$ a domain in $\fH$ that is stable under the action of $\Gamma$. Denote by $V_r(D)$  the solution space on $D$ to the differential equation
$$
f^{(r+1)}+gf=0\,.
$$
Then for all $\gamma\in\Gamma$,
$$
f\in V_r(D) \ \mbox{ if and only if } f|_{-r}\gamma\in V_r(D)\,.
$$
\end{cor}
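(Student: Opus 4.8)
The plan is to derive the statement directly from Bol's identity (the preceding proposition) combined with the transformation law of $g$, the whole content being a matching of automorphy factors. First I would fix $\gamma=\binom{a\ b}{c\ d}\in\G$, viewed inside $\SC$ so that Bol's identity applies, and fix a meromorphic $f$ on $D$. Since $D$ is stable under $\G$, the function $f|_{-r}\gamma$ is again defined on $D$, and the whole claim amounts to showing that $f|_{-r}\gamma$ satisfies $(f|_{-r}\gamma)^{(r+1)}+g\,(f|_{-r}\gamma)=0$ on $D$ precisely when $f$ does. So I would compute the expression $(f|_{-r}\gamma)^{(r+1)}+g\,(f|_{-r}\gamma)$ by handling its two summands separately.

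For the first summand, Bol's identity gives at once
\[
(f|_{-r}\gamma)^{(r+1)}(z)=\bigl(f^{(r+1)}|_{r+2}\gamma\bigr)(z)=(cz+d)^{-(r+2)}\,f^{(r+1)}(\gamma\cdot z).
\]
For the second summand I would use that $g$ is automorphic of weight $2(r+1)$, i.e.\ $g(z)=(cz+d)^{-2(r+1)}g(\gamma\cdot z)$, together with $(f|_{-r}\gamma)(z)=(cz+d)^{r}f(\gamma\cdot z)$, to obtain
\[
g(z)\,(f|_{-r}\gamma)(z)=(cz+d)^{-(r+2)}\,g(\gamma\cdot z)\,f(\gamma\cdot z).
\]
The key point, and the only place where the precise weight $2(r+1)$ of $g$ enters, is the exponent identity $-2(r+1)+r=-(r+2)$, which makes the automorphy factors of the two summands coincide. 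I expect this bookkeeping to be the entire substance of the argument rather than a genuine obstacle.

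Adding the two displays then yields
\[
(f|_{-r}\gamma)^{(r+1)}(z)+g(z)\,(f|_{-r}\gamma)(z)=(cz+d)^{-(r+2)}\Bigl[f^{(r+1)}(\gamma\cdot z)+g(\gamma\cdot z)\,f(\gamma\cdot z)\Bigr].
\]
Because $(cz+d)^{-(r+2)}$ never vanishes on $\fH$, the left side vanishes identically on $D$ if and only if the bracket vanishes at every point $\gamma\cdot z$ with $z\in D$; and since $D$ is $\G$-stable, these points range over all of $D$. Hence $f\in V_r(D)$ forces $f|_{-r}\gamma\in V_r(D)$. For the reverse implication I would invoke that, in the absence of a multiplier system, $\gamma\mapsto(\,\cdot\,)|_{-r}\gamma$ is a genuine right action of $\G$, so $(f|_{-r}\gamma)|_{-r}\gamma^{-1}=f$; applying the implication just proved to the form $f|_{-r}\gamma$ and the element $\gamma^{-1}\in\G$ recovers $f\in V_r(D)$, completing the equivalence.
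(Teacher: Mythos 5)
Your proof is correct and follows exactly the route the paper intends: the corollary is stated as an immediate consequence of Bol's identity, and your computation---applying Bol's identity to the derivative term, the weight $2(r+1)$ automorphy of $g$ to the other term, and matching the common factor $(cz+d)^{-(r+2)}$---is precisely that deduction, with the exponent bookkeeping and the $\G$-stability of $D$ correctly accounted for. The paper itself gives no written proof beyond ``as a consequence,'' so your argument supplies the intended details rather than a different approach.
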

\begin{cor}
The operator $|_{-r}$ provides a representation $\rho_r$ of $\Gamma$ in $\mbox{GL}(V_r)$. Moreover, if $f_1$, $f_2$,\ldots,$f_{r+1}$ form a basis of $V_r$ (if the basis exists), then
$$F=(f_1,f_2,\ldots,f_{r+1})^t
$$
behaves as a $\rho_r-$VMF of weight $-r$ for $\Gamma$.
\end{cor}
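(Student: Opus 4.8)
The plan is to prove the two assertions in turn, using throughout the $\BC$-linearity and the cocycle property of the slash operator together with \corref{cor4.2}.

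First I would check that $|_{-r}$ gives a representation. Fix $\gamma\in\G$. The map $f\mapsto f|_{-r}\gamma$ is $\BC$-linear, since the automorphy factor $(cz+d)^{r}$ is a scalar depending only on $\gamma$ and $z$; it maps $V_r$ into $V_r$ by \corref{cor4.2}; and it is invertible, with inverse $|_{-r}\gamma^{-1}$, because the slash by the identity matrix is the identity operator. Hence each $|_{-r}\gamma$ is a well-defined element of $\mbox{GL}(V_r)$. The cocycle relation $f|_{-r}(\gamma\delta)=(f|_{-r}\gamma)|_{-r}\delta$ then shows that the assignment $\gamma\mapsto|_{-r}\gamma$ is compatible with the group law, so that (after fixing the order convention discussed below) it defines a representation $\rho_r:\G\to\mbox{GL}(V_r)$.

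Next, suppose a basis $f_1,\dots,f_{r+1}$ of $V_r$ exists, and set $F=(f_1,\dots,f_{r+1})^t$. For each $\gamma$, \corref{cor4.2} guarantees $f_i|_{-r}\gamma\in V_r$, so there are unique scalars $\rho_r(\gamma)_{ij}$ with $f_i|_{-r}\gamma=\sum_{j}\rho_r(\gamma)_{ij}f_j$. Stacking these relations yields
$$
F|_{-r}\gamma\,=\,\rho_r(\gamma)\,F\,,
$$
which is precisely the VMF transformation law \eqref{inv1} with weight $k=-r$. To see that $\gamma\mapsto\rho_r(\gamma)$ is a homomorphism, I would apply $|_{-r}\delta$ to this identity: by $\BC$-linearity of the slash and the cocycle relation the left side becomes $F|_{-r}(\gamma\delta)=\rho_r(\gamma\delta)F$, while the right side becomes $\rho_r(\gamma)\big(F|_{-r}\delta\big)=\rho_r(\gamma)\rho_r(\delta)F$; comparing and cancelling $F$ through the linear independence of the $f_j$ gives $\rho_r(\gamma\delta)=\rho_r(\gamma)\rho_r(\delta)$. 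Thus $F$ satisfies the defining relation of a $\rho_r$-VMF of weight $-r$.

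The point requiring care is the direction of the group law: regarded as operators on $V_r$ the maps $|_{-r}\gamma$ compose as an anti-homomorphism ($T_{\gamma\delta}=T_\delta T_\gamma$), whereas the vector convention $F|_{-r}\gamma=\rho_r(\gamma)F$ produces a genuine homomorphism, the two matrices differing by a transpose; the statement should therefore fix one convention consistently. Finally, this is why the corollary asserts only that $F$ \emph{behaves as} a $\rho_r$-VMF: the patching argument establishes the equivariance \eqref{inv1} but not the growth conditions at the cusps that a bona fide VMF must satisfy, and it presupposes the hypothesis ``if the basis exists'', which can fail since on a $\G$-stable domain that is not simply connected the global solution space $V_r$ may have dimension strictly less than $r+1$.
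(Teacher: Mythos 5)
Your proposal is correct and supplies exactly the standard verification that the paper leaves implicit (the corollary is stated without proof as an immediate consequence of \corref{cor4.2} and the cocycle property of the slash operator). Your additional observation about the homomorphism versus anti-homomorphism convention, and your reading of ``behaves as'' and ``if the basis exists'' as flagging the unchecked cusp conditions and the possible failure of $\dim V_r = r+1$ on a non--simply connected domain, are accurate and consistent with how the paper uses this corollary later.
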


We know state the main result of this paper. Recall from \propref{direct} that if $F=(f_1,f_2)^t$ is a $\rho-$VMF, then $h_F=f_1/f_2$ is a $\rho-$equivariant function.

\begin{thm}
The map
$$V_{\rho}(\Gamma)_{-1}\rightarrow \e$$
$$F\mapsto h_F$$
is surjective.
\end{thm}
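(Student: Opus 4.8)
The plan is to start from an arbitrary nonconstant $h\in\e$ and run the construction of \propref{direct} in reverse, recovering a vector-valued modular form whose component ratio is $h$. First I would set $g=\tfrac12 S(h)$; by \propref{s-autom} this is an automorphic form of weight $4$ for $\G$, hence meromorphic on $\fH$ with a pole set $P$ that is discrete and, because $g(\g z)=(cz+d)^4 g(z)$ with $cz+d\neq 0$ on $\fH$, stable under $\G$. Thus $D=\fH\setminus P$ is a $\G$-stable domain on which $S(h)=2g$ is holomorphic, and \thmref{thm3.3} supplies two linearly independent holomorphic solutions $f_1,f_2$ of $y''+gy=0$ on all of $D$. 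By property (1) of \propref{schwarz}, $S(f_1/f_2)=2g=S(h)$, so by property (2) there is $M\in\GC$ with $h=M\cdot(f_1/f_2)$; replacing $(f_1,f_2)^t$ by $M(f_1,f_2)^t$, which is again a basis of the solution space, I may assume outright that $h=f_1/f_2=h_F$.

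Next I would read off the transformation law of $F=(f_1,f_2)^t$. Taking $r=1$ in \corref{cor4.2}, the operator $|_{-1}$ preserves the solution space, so there is a representation $A\colon\G\to\GC$ with $F|_{-1}\g=A(\g)F$ for every $\g\in\G$. Since $F|_{-1}\g(z)=(cz+d)(f_1,f_2)^t(\g z)$, the ratio of its two components is $f_1(\g z)/f_2(\g z)=h(\g z)$, while the ratio of the components of $A(\g)F$ is $A(\g)\cdot h(z)$; hence $h(\g z)=A(\g)\cdot h(z)$. Comparing with the equivariance relation $h(\g z)=\rho(\g)\cdot h(z)$ and using that the nonconstant meromorphic $h$ takes infinitely many values, the Möbius transformations attached to $A(\g)$ and $\rho(\g)$ agree on an infinite set and therefore coincide, so $A(\g)=\lambda(\g)\rho(\g)$ for a character $\lambda\colon\G\to\BC^\times$.

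The main obstacle is to remove this scalar ambiguity and land in $\vm_{-1}$ for $\rho$ itself rather than merely projectively. Here I would exploit the absence of a first-order term in $y''+gy=0$: the Wronskian $W(f_1,f_2)$ is then a nonzero constant, and a short computation gives $W(F|_{-1}\g)=W(F)\circ\g=W(F)$, which combined with $W(F|_{-1}\g)=\det A(\g)\,W(F)$ forces $\det A(\g)=1$ and hence $\lambda(\g)^2=\det\rho(\g)^{-1}$. To pass from $A$ to $\rho$ I would twist $F$ by a scalar: if $\phi$ is a meromorphic function on $\fH$ with $\phi(\g z)=\lambda(\g)^{-1}\phi(z)$, then $\tilde F=\phi F$ satisfies $\tilde F|_{-1}\g=\rho(\g)\tilde F$ while $h_{\tilde F}=h$ is unchanged. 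Such a $\phi$ exists as a meromorphic section of the flat line bundle on the compact surface $X(\G)$ determined by $\lambda^{-1}$ (equivalently, a weight-$0$ form with multiplier $\lambda^{-1}$, which the paper's convention of suppressing multiplier systems lets us absorb). It then remains only to check that $\tilde F$ extends meromorphically across the removed set $P$, using that $f_1/f_2=h$ is single-valued and meromorphic so that the local monodromy at each regular singular point of the ODE is trivial, and that it meets the cusp growth conditions; granting this, $\tilde F\in\vm_{-1}$ with $h_{\tilde F}=h$, which establishes surjectivity.
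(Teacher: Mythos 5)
Your proposal follows the same skeleton as the paper's proof: set $g=\tfrac12 S(h)$ (a weight~4 automorphic form by \propref{s-autom}), apply \thmref{thm3.3} on the $\G$-stable domain $D$ to obtain global solutions $f_1,f_2$ of $y''+gy=0$, match $f_1/f_2$ with $h$ via properties (1) and (2) of \propref{schwarz}, and invoke \corref{cor4.2} with $r=1$ for the transformation law. Where you genuinely diverge is at the step identifying the resulting representation with $\rho$. The paper asserts $\alpha^{-1}\rho_1\alpha=\rho$ outright, but comparing the two equivariance relations for $h$ only identifies the matrices in $\mathrm{PGL}_2(\BC)$, i.e.\ up to a character $\lambda$ of $\G$ --- exactly the ambiguity you isolate. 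Your Wronskian computation (constant because the ODE has no first-order term, and invariant under $|_{-1}\g$), which pins down $\det A(\g)=1$ and hence $\lambda(\g)^2=\det\rho(\g)^{-1}$, followed by the twist by a scalar $\phi$ with multiplier $\lambda^{-1}$, is a legitimate way to close this gap; its cost is the appeal to the existence of such a $\phi$ as a meromorphic section of the flat line bundle on $X(\G)$ attached to $\lambda^{-1}$, which is precisely the multiplier-system issue the paper announces it will suppress. So your route buys a rigorous treatment of the projective-versus-linear discrepancy that the paper's one-line ``therefore'' glosses over, while the paper's version is shorter because it implicitly absorbs $\lambda$ into the omitted multiplier system. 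Both arguments treat the remaining points --- meromorphic continuation of $f_1,f_2$ across the (double) poles of $g$ via the theory of regular singular points and single-valuedness, and the growth conditions at the cusps --- at the same level of brevity.
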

\begin{proof}
Suppose  that $h$ is a $\rho-$equivariant function for $\Gamma$. According \propref{s-autom},  its Schwarz derivative $S(h)$ is an automorphic form of weight 4 for $\Gamma$. Let $g=\frac{1}{2}S(h)$ and $D$ the complement in $\fH$ of the set of poles of $g$.
Then $D$ is a domain that is stable under $\Gamma$ since $g$ is an automorphic form for $\Gamma$.

Using the same notation as in the previous section, we have, for $r=1$, $S(f_1/f_2)=S(h)$ where $\{f_1,f_2\}$ are two linearly independent
solutions in $V(D)$ provided by \thmref{thm3.3}.
Hence, by \propref{schwarz}
$$
\frac{f_1}{f_2}\,=\,\alpha\cdot h\ ,\quad \alpha\in\GC\,.
$$
Also, using \corref{cor4.2} with $r=1$, we deduce that $F_1=(f_1,f_2)^t$ is a $\rho_1-$VMF of weight $-$1 for $\Gamma$. Therefore,
$$\alpha^{-1}\rho_1\alpha=\rho\,.$$
Hence $F=\alpha^{-1} F_1$ is a $\rho-$VMF of weight $-$1 for $\Gamma$ with $h_F=h$ on $D$. Since $g$ has only double poles, then by looking at the form
of the solutions near a singular point, and using the fact that $f_1$ and $f_2$ are holomorphic and thus single-valued, we see that $f_1$ and $f_2$
can be extended to meromorphic functions on $\fH$.
\end{proof}
\begin{remark}
If $f$ is an automorphic form of weight $k+1$ for $\G$, then $(f_1,f_2)\longrightarrow (ff_1,ff_2)$ yields an isomorphism between $V_{\rho}(\Gamma)_{-1}$ and $V_{\rho}(\Gamma)_{k}$, and therefore, the above surjection in the theorem extends to $V_{\rho}(\Gamma)_{k}$ whenever
it is nontrivial.
\end{remark}

\section{Examples}
In this section, we shall construct examples of $\rho-$VMF's and  of $\rho-$equivariant functions when $\rho$ is the monodromy representation
of a differential equation.

Let U be a domain in $\mathbb C$ such that $\mathbb C\setminus U$ contains at least two points. The universal covering of $U$ is then $\fH$
as it cannot be ${\mathbb P}_1(\mathbb C)$ because $U$ is noncompact and it cannot be $\mathbb C$ because of Picard's theorem.

Let $\pi: \fH\longrightarrow U$ be the covering map. We consider the differential equation on $U$
\begin{equation}\label{diff1}
y''+Py'+Qy=0
\end{equation}
where $P$ and $Q$ are two holomorphic functions on $U$. This differential equation has a lift to $\fH$
\begin{equation}\label{diff2}
y''+\pi^*Py'+\pi^*Qy=0\,.
\end{equation}
Let $V$ be the solution space to \eqref{diff2} which is a 2-dimensional vector space since $\fH$ is simply connected. Let $\gamma$ be a
covering transformation in Deck$(\fH/U)$ which is isomorphic to the fundamental group $\pi_1(U)$  and let $f\in V$.
Then $\gamma^*f=f\circ \gamma^{-1}$ is also a solution in $V$. This defines the monodromy representation of $\pi_1(U)$:
$$
\rho:\pi_1(U)\longrightarrow \mbox{GL}(V)\,.
$$
If $f_1$ and $f_2$ are two linearly independent solutions in $V$, we set $F=(f_1,f_2)^t$. Then we have
$$
F\circ\gamma\,=\,\rho(\gamma)F\,.
$$
Therefore, the quotient $f_1/f_2$ is a $\rho-$equivariant function on $\fH$ for the group $\pi_1(U)$ which is a torsion-free
 discrete group.

\end{document}